\documentclass[11pt,a4paper]{article}

\usepackage[utf8]{inputenc}
\usepackage[T1]{fontenc}
\usepackage{lmodern}
\usepackage{amsmath,amssymb,amsthm}
\usepackage{mathtools}
\usepackage[margin=1in]{geometry}
\usepackage{graphicx}
\usepackage{booktabs}
\usepackage{enumitem}
\usepackage{microtype}
\usepackage[colorlinks=true,linkcolor=blue,citecolor=blue,urlcolor=blue]{hyperref}

\theoremstyle{plain}
\newtheorem{theorem}{Theorem}
\newtheorem{proposition}{Proposition}
\newtheorem{lemma}{Lemma}

\theoremstyle{definition}
\newtheorem{problem}{Problem}
\newtheorem{remark}{Remark}
\newtheorem{construction}{Construction}

\newcommand{\chis}{\chi'_s}
\newcommand{\Kfour}{K_4}
\DeclareMathOperator{\col}{col}

\title{\bfseries A diamond-free claw-free cubic graph\\ with strong chromatic index 7}
\author{Kanishk Raj Tanwar\\[2pt]\normalsize \texttt{kanishkrajtanwar@gmail.com}}
\date{July 26, 2026}

\begin{document}
\maketitle

\begin{abstract}
A \emph{strong edge coloring} is a proper edge coloring in which every color class is an
induced matching; the least number of colors is the \emph{strong chromatic index}
$\chis(G)$. Lin and Lin proved that every claw-free subcubic graph other than the
triangular prism satisfies $\chis(G)\le 7$, with all their tight examples containing
diamonds. Kardo\v{s} (Problem~4.1 of the open-problem collection of the $33$rd Workshop on
Cycles and Colourings) asked whether every diamond-free claw-free cubic graph is strongly
$6$-edge-colorable, equivalently whether $\chis(T(G))=6$ for every cubic graph $G$, where
$T(G)$ is the truncation of $G$. We exhibit an explicit connected, simple, diamond-free,
claw-free cubic graph $H$ on $18$ vertices with $\chis(H)=7$, and show that it has the
fewest vertices possible for such a non-prism example. Hence the first formulation, over
simple cubic graphs, is false even after excluding the prism; and since $H$ is the
truncation of a cubic \emph{multigraph} with parallel edges, the two formulations are not
equivalent unless ``cubic graph'' is allowed to mean loopless multigraph, under which
reading the problem is answered negatively. The narrower version restricted to truncations
of \emph{simple} base graphs remains open. The graph $H$ was found with the assistance of
OpenAI's GPT-5.6 Sol Pro, and all claims are verified independently, both by hand and by
exact computation.
\end{abstract}

\medskip
\noindent\textbf{Keywords.} strong edge coloring, strong chromatic index, claw-free graph,
cubic graph, truncation, diamond-free graph.

\noindent\textbf{2020 Mathematics Subject Classification.} 05C15.

\section{Introduction}

Let $G$ be a simple graph. A \emph{strong edge coloring} assigns colors to the edges of
$G$ so that any two edges that are either adjacent or joined by an edge receive distinct
colors; equivalently, it is a proper vertex coloring of the square $L(G)^2$ of the line
graph of $G$. The minimum number of colors in a strong edge coloring is the
\emph{strong chromatic index} $\chis(G)=\chi\!\left(L(G)^2\right)$.

A graph is \emph{claw-free} if it contains no induced $K_{1,3}$, and \emph{diamond-free}
if it contains no $\Kfour-e$ as a subgraph (equivalently, no edge lies in two triangles).
Lv, Li and Zhang~\cite{LvLiZhang2022} proved that every claw-free subcubic graph other
than the triangular prism $C_3\,\square\,K_2$ satisfies $\chis(G)\le 8$. Lin and
Lin~\cite{LinLin2023} sharpened this to $\chis(G)\le 7$ and exhibited an infinite family
attaining the bound; crucially, every graph in that family contains a diamond.

Diamond-free claw-free cubic graphs are exactly the \emph{truncations} of cubic
(multi)graphs: the truncation $T(G)$ replaces each vertex of $G$ by a triangle, one corner
per incident edge~\cite{Kardos2025}. Since the diamond is the only obstruction removed by
truncation, it is natural to ask whether forbidding diamonds forces the strong chromatic
index down from $7$ to $6$. Kardo\v{s} posed this precisely.

\begin{problem}[Kardo\v{s} {\cite[Problem 4.1]{Kardos2025}}]\label{prob:41}
Is it true that every diamond-free claw-free cubic graph is strongly $6$-edge-colorable?
In other words, is it true that $\chis(T(G))=6$ for every cubic graph $G$?
\end{problem}

The only progress reported is by Han and Cui~\cite{HanCui2023}, who proved that the
truncated prisms are strongly $6$-edge-colorable. In this note we show that
Problem~\ref{prob:41}, taken over simple cubic graphs in its first formulation, is false,
and we analyze precisely which reading of the problem survives.\footnote{The graph $H$ was
discovered with the assistance of OpenAI's GPT-5.6 Sol Pro. All statements in this paper were
subsequently verified independently by the author, both by the self-contained argument of
Section~3 and by the exact computation described in Section~5.}

\begin{theorem}\label{thm:main}
There exists a connected, simple, diamond-free, claw-free cubic graph $H$ on $18$ vertices
that is not the triangular prism and satisfies $\chis(H)=7$.
\end{theorem}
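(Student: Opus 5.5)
We will exhibit $H$ explicitly as a truncation $T(M)$ of a cubic multigraph $M$ on $6$ vertices, so that $H$ has $18$ vertices and $27$ edges. By the characterization recalled above, $H$ is then automatically claw-free and diamond-free. The natural candidate for $M$ is a $6$-vertex loopless cubic multigraph containing parallel edges. One option is to take a $6$-cycle and double three alternate edges; another is to take $K_{3,3}$-like pieces with digons. A digon in $M$ becomes a $4$-cycle in $H$ that meets two triangles, and this is the local structure we expect to force extra colors.

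We then verify simplicity and connectivity by inspection, and note that $H$ is not the prism since it has $18 \neq 6$ vertices.

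For the upper bound $\chis(H)\le 7$, we may simply invoke Lin and Lin~\cite{LinLin2023}, since $H$ is claw-free, subcubic and not the prism. We would also list an explicit $7$-coloring, so that the example can be checked independently.

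The main work is the lower bound $\chis(H)\ge 7$, which means showing that no strong $6$-edge-coloring exists. We first record the local constraints. Every edge of $H$ is either a triangle edge or a matching edge corresponding to an edge of $M$. A matching edge $e=xy$ is at distance at most one from the four triangle edges at its ends and from the four matching edges leaving those triangles. For each digon $\{u,v\}$ of $M$ with parallel edges $e_1,e_2$, the two triangles $T_u,T_v$ together with $e_1,e_2$ form a ``diamond gadget'' of $8$ edges. In this gadget every pair of edges is adjacent or joined by an edge, except a controlled few. With $6$ colors, this forces the two triangles to reuse colors in a rigid pattern; in particular the color set on $T_u\cup T_v$ is determined up to permutation. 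We then propagate these forced patterns to the third edge at each of $u$ and $v$. We expect the propagation to close up around $M$ into a parity or counting contradiction. A typical form would be that the color missing at each gadget must also be used on an edge at distance one.

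The main obstacle is making this case analysis both complete and short. To handle it, we would fix colors on one gadget by symmetry, which reduces the count by a factor of $6!$ and the gadget's automorphisms. We would then argue in cases on the colors of the outgoing edges. As a safeguard, we would also confirm the nonexistence of a $6$-coloring by exact computation, for instance a SAT or DSATUR check of $\chi(L(H)^2)>6$.
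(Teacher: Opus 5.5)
\paragraph{The gap.} You never commit to a specific $H$. The lower bound then rests on the expectation that the forced digon patterns ``close up around $M$'' into a contradiction. That is exactly the step that needs an idea, and for the candidate you name first it is false. Your local analysis is correct. Write $T_u=u_1u_2u_3$ and $T_v=v_1v_2v_3$, with parallel edges $u_1v_1,u_2v_2$ and outgoing edges $f_u$ at $u_3$ and $f_v$ at $v_3$. The eight gadget edges pairwise conflict except for the pairs $\{u_1u_3,v_2v_3\}$ and $\{u_2u_3,v_1v_3\}$. So a $6$-coloring gives these two pairs colors $\alpha,\beta$, gives $u_1u_2,v_1v_2,u_1v_1,u_2v_2$ the remaining colors $\gamma,\delta,\epsilon,\zeta$, and forces $\col(f_u)=\delta$ and $\col(f_v)=\gamma$.

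However, when $f_u$ and $f_v$ enter different triangles, these constraints are easy to satisfy. Take $M=C_6$ with three alternate edges doubled, and index the gadgets $i=1,2,3$ so that $f_v$ of gadget $i$ is $f_u$ of gadget $i+1$ (mod $3$). For $i=1,2,3$ respectively, set $(\alpha_i,\beta_i)=(1,2),(3,4),(5,6)$, $(\col(u_1u_2),\col(v_1v_2))=(5,3),(1,5),(3,1)$ and $(\col(u_1v_1),\col(u_2v_2))=(4,6),(2,6),(2,4)$. Color the link from gadget $i$ to gadget $i+1$ with $5,1,3$ respectively. Every color class is then an induced matching, so $\chis(T(M))=6$, and no propagation argument can succeed for this graph. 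Your second option is not specified enough to evaluate. (Incidentally, a matching edge conflicts with all six edges of its two end-triangles, not just the four at its ends.)

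\paragraph{The missing idea.} You need $f_u$ and $f_v$ to end in the \emph{same} triangle $T_A$, i.e.\ $M$ must contain a triangle $ABC$ with $BC$ doubled. This is exactly the paper's gadget $Q$: its clique $K$ (Lemma~\ref{lem:clique}) is $T_B$ with its three pendant edges, and its forcing chain is your propagation.

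Take $f_u=a_0b_0$ and $f_v=a_1c_0$. The edges $a_0a_1$ and $a_0a_2$ conflict with $f_u$, with $f_v$, with each other, and (through $a_0b_0$) with the two edges colored $\alpha,\beta$ at $b_0$. So they must take $\epsilon$ and $\zeta$. Then $a_1a_2$ conflicts with $a_0a_1$, $a_0a_2$, $f_u$, $f_v$, and (through $a_1c_0$) with the two edges colored $\alpha,\beta$ at $c_0$. These carry all six colors, so $a_1a_2$ cannot be colored.

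The contradiction is therefore purely local; no parity or global closing-up is needed. Since $A$ keeps one free half-edge, joining two copies of $Q$ by a bridge gives the $18$-vertex $H$. The upper bound then follows from Lin--Lin or an explicit $7$-coloring, as you planned.
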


Our example is small, explicit, and its strong chromatic index is established by a
self-contained forcing argument that we also confirm by an exhaustive computation of
$\chi\!\left(L(H)^2\right)$. Two features are worth emphasizing. First, the graph $H$ is
diamond-free by construction, so it is genuinely of the type the problem forbids diamonds
from. Second, the obstruction to a $6$-coloring is \emph{not} a clique: the square
$L(H)^2$ has clique number $6$ yet chromatic number $7$, so the difficulty is a true
$\chi>\omega$ phenomenon rather than a forced $K_7$.

\section{The construction}

\subsection{A nine-vertex gadget}

\begin{construction}\label{con:Q}
Let $Q$ be the graph on nine vertices built from three vertex-disjoint triangles
\[
  A = a_0a_1a_2a_0, \qquad B = b_0b_1b_2b_0, \qquad C = c_0c_1c_2c_0,
\]
together with the four connecting edges
\[
  a_0b_0, \qquad a_1c_0, \qquad b_1c_1, \qquad b_2c_2.
\]
\end{construction}

\begin{figure}[h]
\centering
\includegraphics[width=0.95\textwidth]{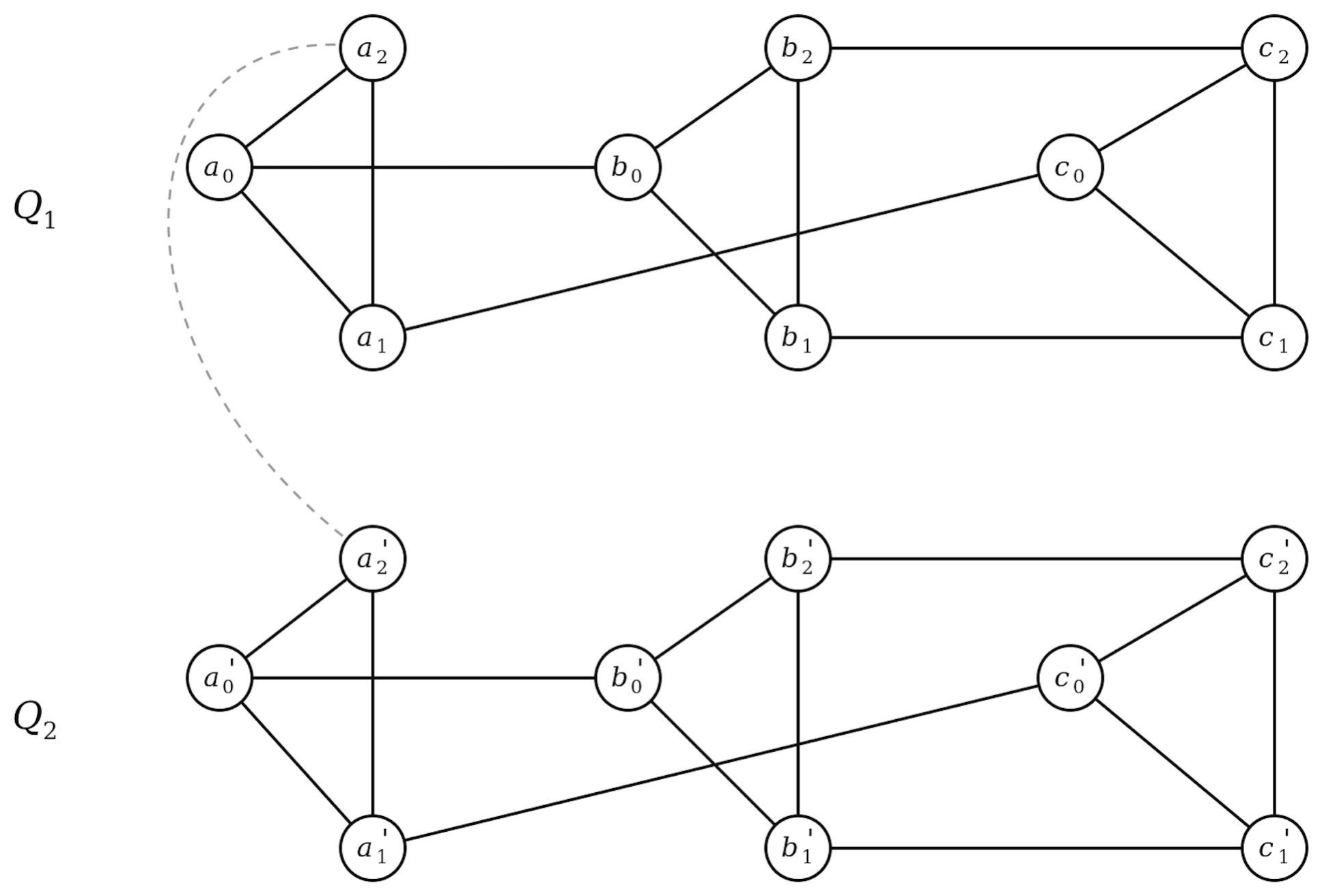}
\caption{The graph $H$: two copies $Q_1$ and $Q_2$ of the nine-vertex gadget $Q$, joined by
the bridge $a_2^{(1)}a_2^{(2)}$ (dashed). The vertices of the second copy $Q_2$ are marked
with primes. Within each copy the three triangles $A=a_0a_1a_2$, $B=b_0b_1b_2$,
$C=c_0c_1c_2$ are joined by the four connecting edges $a_0b_0$, $a_1c_0$, $b_1c_1$,
$b_2c_2$; the vertex $a_2$ is the unique degree-$2$ vertex of $Q$ and is raised to degree
$3$ by the bridge.}
\label{fig:Q}
\end{figure}

Every vertex of $Q$ has degree $3$ except $a_2$, which has degree $2$
(it lies only in triangle $A$ and receives no connecting edge). Thus $Q$ has $9$ vertices,
$3\cdot 3 + 4 = 13$ edges, and a single vertex of degree $2$.

\subsection{The cubic graph \texorpdfstring{$H$}{H}}

\begin{construction}\label{con:H}
Take two disjoint copies $Q_1,Q_2$ of $Q$, with corresponding degree-$2$ vertices
$a_2^{(1)}$ and $a_2^{(2)}$, and add the single bridge $a_2^{(1)}a_2^{(2)}$. Call the
resulting graph $H$.
\end{construction}

The bridge raises both degree-$2$ vertices to degree $3$, so
\[
  |V(H)| = 18, \qquad |E(H)| = 2\cdot 13 + 1 = 27,
\]
and $H$ is cubic. It is clearly simple and connected.

\begin{proposition}\label{prop:props}
$H$ is a connected, simple, diamond-free, claw-free cubic graph on $18$ vertices, and it is
not the triangular prism.
\end{proposition}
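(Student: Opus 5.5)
We verify the listed properties one at a time, directly from Constructions~\ref{con:Q} and~\ref{con:H}. Cubicity, simplicity and connectivity were noted above. Each copy $Q_i$ is connected, since the connecting edges $a_0b_0$ and $a_1c_0$ join $A$ to $B$ and to $C$. The bridge then joins the two copies. The edges listed are all distinct and there are no loops, and every vertex has degree $3$ by the degree count already given. The prism has only $6$ vertices while $H$ has $18$, so $H$ is not the prism.

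The main step is to locate all triangles of $H$. We show that the only triangles are the six gadget triangles $A,B,C$ in each copy. The bridge is a cut edge, so it lies on no cycle. Hence any triangle lies inside a single copy $Q$. Now consider the edges of $Q$. The intra-triangle edges form the vertex-disjoint triangles $A$, $B$, $C$. The four connecting edges form a matching, because their endpoints $a_0,b_0,a_1,c_0,b_1,c_1,b_2,c_2$ are all distinct. So each vertex of $Q$ has at most one neighbor outside its own triangle. A triangle using a connecting edge $uv$ with $u,v$ in different gadget triangles would need a common neighbor $w$. The vertex $w$ cannot lie in the triangle of $u$, since then $v$ would be a second outside neighbor of $w$ or of $u$. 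The same argument rules out $w$ in the triangle of $v$. A third triangle is also impossible, since $w$ would then need two outside neighbors. Therefore the triangles of $H$ are exactly the six gadget triangles, and they are pairwise vertex-disjoint.

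Diamond-freeness follows at once. A diamond needs an edge lying in two triangles. Distinct triangles of $H$ are vertex-disjoint, so no edge lies in two of them.

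For claw-freeness, take any vertex $v$. It lies in exactly one triangle: a gadget triangle for the vertices of $Q$, and triangle $A$ of its copy for $a_2^{(i)}$. Its three neighbors are therefore its two triangle-mates, which are adjacent to each other, together with one further vertex. Among any three neighbors of $v$, two are adjacent, so $N(v)$ induces no independent set of size $3$, and $v$ is not the center of an induced $K_{1,3}$.

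The only step needing care is the triangle enumeration. It rests on the observation that the connecting edges together with the bridge form a perfect matching of $H$, namely the complement of the triangle edges. Once that is checked, all remaining properties follow routinely.
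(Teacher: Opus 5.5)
Your proof is correct and follows essentially the paper's route. Claw-freeness is argued identically: two of each vertex's three neighbours are adjacent triangle-mates. Diamond-freeness rests, in both versions, on the six vertex-disjoint triangles together with the perfect matching of cross edges. You enumerate the triangles, while the paper checks common neighbours edge by edge (exactly one for triangle edges, none for cross edges), and you state the matching property explicitly where the paper uses it tacitly.

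One small wording fix: when $w$ lies in the triangle of $u$, the contradiction is that $v$ would have two outside neighbours, $u$ and $w$, i.e.\ two connecting edges would meet at $v$. It is not that $v$ would be a ``second'' outside neighbour of $w$ or of $u$; for $w=a_2$, for instance, $v$ would be $w$'s only outside neighbour.
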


\begin{proof}
\emph{Claw-free.} Every vertex of $H$ lies in exactly one of the six triangles. Hence among
the three neighbors of any vertex $v$, the two that lie in $v$'s triangle are adjacent, so
$v$ cannot be the center of an induced $K_{1,3}$. Therefore $H$ is claw-free.

\emph{Diamond-free.} A diamond $\Kfour-e$ has a pair of adjacent vertices with two common
neighbors, so it suffices to show that no edge of $H$ has endpoints with two common
neighbors. If an edge lies inside one of the six triangles, its endpoints have exactly one
common neighbor, namely the third vertex of that triangle. If an edge joins two different
triangles (a connecting edge or the bridge), its endpoints lie in disjoint triangles and
have no common neighbor. Hence no edge has two common neighbors, and $H$ contains no
diamond, even as a non-induced subgraph.

The triangular prism has $6$ vertices, whereas $|V(H)|=18$.
\end{proof}

\section{Six colors are impossible}

Because $Q_1$ is a subgraph of $H$, it suffices to prove that $Q$ cannot be strongly
edge-colored with six colors; then $\chis(H)\ge\chis(Q)\ge 7$. Throughout this section we
work inside one copy $Q$ and suppress the copy superscript. We name the thirteen edges of
$Q$:
\[
  x=a_0b_0,
\]
\[
  u=b_0b_1,\quad v=b_0b_2,\quad w=b_1b_2,\qquad y=b_1c_1,\quad z=b_2c_2,
\]
\[
  r=c_1c_2,\quad s=c_0c_1,\quad t=c_0c_2,\qquad q=a_1c_0,\quad m=a_0a_1,\quad n=a_0a_2,\quad p=a_1a_2.
\]

\subsection{A forced six-edge clique}

\begin{lemma}\label{lem:clique}
The six edges $K=\{x,u,v,w,y,z\}$ are pairwise in conflict; that is, they form a $K_6$ in
$L(Q)^2$.
\end{lemma}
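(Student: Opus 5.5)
Two edges conflict in $L(Q)^2$ exactly when they share an endpoint, or when some third edge joins an endpoint of one to an endpoint of the other. So for each of the $\binom{6}{2}=15$ pairs from $K$ I will either name a shared endpoint or name a single edge of $Q$ linking them. The key structural fact is that every edge of $K$ has at least one endpoint in the triangle $B=b_0b_1b_2$. Specifically, $x$ contains $b_0$, $y$ contains $b_1$, $z$ contains $b_2$, and $u,v,w$ lie inside $B$. Since $B$ is a triangle, any two of its vertices are equal or adjacent, and the plan is to exploit this uniformly.

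First I handle the pairs inside the triangle. The three edges $u,v,w$ pairwise share a vertex of $B$: $u,v$ share $b_0$, $u,w$ share $b_1$, and $v,w$ share $b_2$.

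Next I take a pendant edge of $K$ together with a triangle edge. Each of $u,v,w$ contains two of the three vertices of $B$. Hence it either contains the $B$-endpoint of the pendant edge, giving a shared vertex, or it contains a vertex adjacent to that endpoint. In the second case the linking edge is itself an edge of $B$. For instance, $x$ and $w$ are linked by $u=b_0b_1$, which joins $b_0\in x$ to $b_1\in w$. Likewise $y$ and $v$ are linked by $u$ (or $w$), and $z$ and $u$ are linked by $v$ (or $w$).

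Finally I take two pendant edges. Their $B$-endpoints are distinct vertices of the triangle, so they are adjacent. The edge of $B$ joining them certifies the conflict: $x,y$ via $u$, $x,z$ via $v$, and $y,z$ via $w$.

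None of these steps is a real obstacle; the only care needed is to check all fifteen pairs, which the three-case split above covers exhaustively. I would record the certificates in a short table or displayed list so the reader can verify each one directly against the edge list of Construction~\ref{con:Q}.
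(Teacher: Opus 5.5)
Your proposal is correct and follows essentially the same route as the paper: every edge of $K$ has an endpoint in the triangle $B$, so each of the fifteen pairs either shares a vertex or is linked by an edge of $B$. Your version is slightly more explicit than the paper's. You separately certify the pendant/opposite-triangle-edge pairs (such as $x,w$ via $u$), which the paper covers only in its closing sentence.
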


\begin{proof}
The edges $u,v,w$ are the three edges of triangle $B$, hence pairwise adjacent. The edges
$x,y,z$ leave the three vertices $b_0,b_1,b_2$ of $B$ respectively, so each is adjacent to
two of $u,v,w$, and any two of $x,y,z$ have endpoints joined by an edge of $B$. Thus every
pair among the six edges is either adjacent or joined by an edge of $B$.
\end{proof}

In any strong coloring with at most six colors, the edges of $K$ therefore receive six
distinct colors. We rename the colors $x,u,v,w,y,z$ so that each color is named after the
clique edge carrying it.

For each of the remaining seven edges, we list the colors of $K$ that are \emph{not}
forbidden by the clique --- that is, the colors of the clique edges at strong distance
greater than two from that edge:

\begin{center}
\begin{tabular}{cl}
\toprule
Edge & Colors not forbidden by $K$ \\
\midrule
$r$ & $\{x\}$ \\
$s$ & $\{x,v\}$ \\
$t$ & $\{x,u\}$ \\
$q$ & $\{u,v,w\}$ \\
$m$ & $\{w,y,z\}$ \\
$n$ & $\{w,y,z\}$ \\
$p$ & $\{u,v,w,y,z\}$ \\
\bottomrule
\end{tabular}
\end{center}

For instance $r=c_1c_2$ conflicts with each of $u,v,w,y,z$ but not with $x=a_0b_0$, so $r$
can only receive color $x$. (Each row is verified directly from the adjacencies of $Q$;
see also Section~\ref{sec:comp}.)

\subsection{The forcing chain}

\begin{proposition}\label{prop:noSix}
$\chis(Q)\ge 7$.
\end{proposition}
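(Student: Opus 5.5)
The plan is a short forcing chain. By Lemma~\ref{lem:clique}, any strong coloring of $Q$ with at most six colors gives the clique $K$ six distinct colors. We name these colors $x,u,v,w,y,z$ as in the table, so every other edge must take a color from its list of colors not forbidden by $K$. Starting from the single-color list of $r$, we propagate forced choices through the remaining edges $s,t,q,m,n,p$. The chain ends with $p$ having no admissible color, which is the contradiction that proves the proposition.

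The steps, in order, are as follows.
\begin{enumerate}
\item The edge $r$ has list $\{x\}$, so $r$ receives color $x$.
\item The edges $s=c_0c_1$ and $t=c_0c_2$ are both adjacent to $r=c_1c_2$, so neither can use $x$. Their lists then force $s=v$ and $t=u$.
\item The edge $q=a_1c_0$ is adjacent to both $s$ and $t$ at $c_0$. Hence $q\notin\{u,v\}$, and its list forces $q=w$.
\item The edge $m=a_0a_1$ is adjacent to $q$ at $a_1$. The edge $n=a_0a_2$ is joined to $q$ by the edge $m$ (or by $p$). So neither $m$ nor $n$ may use $w$.
\item The edges $m$ and $n$ are adjacent at $a_0$. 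They therefore take the two distinct colors $y$ and $z$ in some order.
\end{enumerate}

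Finally consider $p=a_1a_2$, whose list is $\{u,v,w,y,z\}$. It is adjacent to $m$ and $n$, which exclude $y$ and $z$. It is adjacent to $q$ at $a_1$, which excludes $w$. It is also joined to both $s=c_0c_1$ and $t=c_0c_2$ by the edge $q=a_1c_0$, since $a_1$ is an endpoint of $p$ and $c_0$ is an endpoint of $s$ and $t$. This excludes $v$ and $u$. So $p$ has no available color, a contradiction, and therefore $\chis(Q)\ge 7$.

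The main point to check carefully is the set of conflicts used along the chain. These are adjacencies or connecting edges that the table does not record, because the table lists only conflicts with $K$. The key ones are $r$ with $s$ and $t$; $q$ with $s$ and $t$; $n$ with $q$; and $p$ with $s$ and $t$ via $q$. Each is a direct check from the adjacencies of $Q$. I expect the last one, $p$ versus $s$ and $t$, to be the crucial observation that closes the argument.
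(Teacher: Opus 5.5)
Your proposal is correct and follows the paper's proof step for step: $r$ is forced to $x$, then $s=v$ and $t=u$, then $q=w$, then $\{m,n\}=\{y,z\}$, and finally $p$ has no color left. The conflicts you justify explicitly, notably $n$ with $q$ (via $m$ or $p$) and $p$ with $s$ and $t$ (via $q$), all check out against the adjacencies of $Q$; the paper uses them without spelling out why they hold.
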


\begin{proof}
Suppose, for contradiction, that $Q$ has a strong $6$-edge-coloring. By
Lemma~\ref{lem:clique} the clique $K$ uses all six colors, named as above. We follow the
table.

\emph{Step 1.} The edge $r$ has the single available color $x$, so $\col(r)=x$. The edges
$s$ and $t$ each conflict with $r$, hence cannot take color $x$; their lists then force
$\col(s)=v$ and $\col(t)=u$.

\emph{Step 2.} The edge $q$ has list $\{u,v,w\}$ and conflicts with both $s$ (color $v$)
and $t$ (color $u$), so $\col(q)=w$.

\emph{Step 3.} The edges $m$ and $n$ have list $\{w,y,z\}$ and both conflict with $q$
(color $w$), leaving $\{y,z\}$. Since $m$ and $n$ conflict with each other, they use the
two colors $y$ and $z$ in some order:
\[
  \{\col(m),\col(n)\}=\{y,z\}.
\]

\emph{Contradiction.} The edge $p$ conflicts with all five edges $s,t,q,m,n$, which now
carry precisely the five colors $v,u,w,y,z$. Its list is $\{u,v,w,y,z\}$, and color $x$ is
already forbidden because $p$ conflicts with the clique edge $x=a_0b_0$. Hence $p$ has no
available color, a contradiction. Therefore no strong $6$-edge-coloring exists and
$\chis(Q)\ge 7$.
\end{proof}

\begin{proof}[Proof of Theorem~\ref{thm:main}]
By Proposition~\ref{prop:props}, $H$ is a connected, simple, diamond-free, claw-free cubic
graph on $18$ vertices, not the triangular prism. Since $Q\cong Q_1\subseteq H$, strong
colorings restrict to subgraphs, so $\chis(H)\ge\chis(Q)\ge 7$ by
Proposition~\ref{prop:noSix}. Conversely $H$ is claw-free, subcubic, and not the triangular
prism, so Lin and Lin's bound~\cite{LinLin2023} gives $\chis(H)\le 7$. Therefore
$\chis(H)=7$.
\end{proof}

\section{The two formulations are not equivalent}

Problem~\ref{prob:41} states its two forms as ``in other words'' equivalent. They are not,
unless ``cubic graph'' is read to include loopless multigraphs.

\begin{proposition}\label{prop:multigraph}
Let $M$ be the graph obtained from $H$ by contracting each of its six triangles to a single
vertex. Then $M$ is the loopless cubic \emph{multigraph} on the six vertices
$A_1,B_1,C_1,A_2,B_2,C_2$ whose edges are, for each $i\in\{1,2\}$,
\[
  A_iB_i,\qquad A_iC_i,\qquad \text{a double edge } B_iC_i,
\]
together with the edge $A_1A_2$; and $H=T(M)$.
\end{proposition}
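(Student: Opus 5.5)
The plan is to carry out the contraction explicitly and then check that truncating the result gives back $H$. By Construction~\ref{con:H}, $H$ has exactly six triangles, $A_i,B_i,C_i$ for $i\in\{1,2\}$. The proof of Proposition~\ref{prop:props} shows that every vertex lies in exactly one of them, so these triangles partition $V(H)$. The $27$ edges of $H$ therefore split into two kinds: the $18$ triangle edges, which become loops under contraction and are discarded, and the $9$ edges joining different triangles. Each of the latter survives as an edge of $M$ between the corresponding contracted vertices. Deleting loops is the standard convention here, and the resulting $M$ is loopless because no non-triangle edge has both ends in the same triangle.

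Next I will read off the non-triangle edges of copy $i$ from Construction~\ref{con:Q}. These are $a_0b_0$, which joins $A_i$ and $B_i$; $a_1c_0$, which joins $A_i$ and $C_i$; and $b_1c_1$ and $b_2c_2$, which both join $B_i$ and $C_i$ and so give the double edge. The bridge $a_2^{(1)}a_2^{(2)}$ gives $A_1A_2$. Counting degrees confirms cubicity: $A_i$ meets $B_i$, $C_i$ and $A_{3-i}$; $B_i$ meets $A_i$ once and $C_i$ twice; and $C_i$ meets $A_i$ once and $B_i$ twice. This yields $3\cdot 6/2=9$ edges, which matches the count above.

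It remains to verify $H=T(M)$. The key observation is that in each triangle of $H$, every corner carries exactly one non-triangle edge, since each vertex has degree $3$ and only two of its edges lie in its triangle. This defines a bijection from the three corners of the triangle $X$ to the three edge-ends of $M$ at vertex $X$. The bijection applies to parallel edges too: the two copies of $B_iC_i$ are distinguished by their corners, namely $b_1,c_1$ and $b_2,c_2$ respectively. With this, the definition of $T(M)$ matches $H$ directly: replace each vertex $X$ by a triangle with one corner per incident edge-end, and join the corners corresponding to the two ends of each edge of $M$. Under the bijection, the triangle edges of $T(M)$ correspond to those of $H$, and the inter-triangle edges of $T(M)$ correspond to the nine non-triangle edges of $H$. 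This gives an isomorphism.

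The only delicate step is this last one, namely making sure the truncation is well defined and correctly matched when $M$ has parallel edges. That requires treating edges of $M$ as distinct objects, or equivalently working with edge-ends. Once that convention is fixed, the remainder is bookkeeping.
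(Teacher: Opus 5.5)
Your proposal is correct and follows essentially the same route as the paper: contract the six vertex-disjoint triangles, read off the nine inter-triangle edges (the double edge $B_iC_i$ coming from $b_1c_1$ and $b_2c_2$, and $A_1A_2$ coming from the bridge), and check that every vertex has degree $3$. Your corner-to-edge-end bijection spells out why $H=T(M)$ holds even with parallel edges; the paper only asserts that contracting the triangles inverts the truncation.
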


\begin{proof}
In a diamond-free claw-free cubic graph the triangles are vertex-disjoint and every vertex
lies in exactly one triangle, so contracting the triangles is well defined and inverts the
truncation, giving $H=T(M)$. Reading off the connecting edges of each copy $Q_i$: the edge
$a_0b_0$ becomes $A_iB_i$; the edge $a_1c_0$ becomes $A_iC_i$; the two edges $b_1c_1$ and
$b_2c_2$ both join $B$ to $C$, becoming a double edge $B_iC_i$; and the bridge
$a_2^{(1)}a_2^{(2)}$ becomes $A_1A_2$. Every vertex of $M$ has degree $3$. Moreover this
triangle decomposition is unique --- as noted, every vertex of a diamond-free claw-free
cubic graph lies in exactly one triangle --- so $M$ is the only (multi)graph with
$H=T(M)$; in particular $H$ is the truncation of no \emph{simple} cubic graph.
\end{proof}

\begin{figure}[h]
\centering
\includegraphics[width=0.55\textwidth]{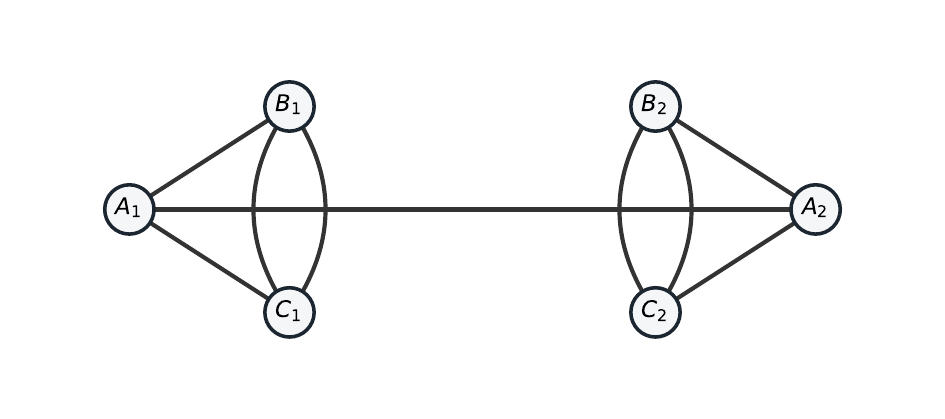}
\caption{The cubic multigraph $M$ with $H=T(M)$. Each pair $B_iC_i$ is joined by a double
edge; these parallel edges are exactly why $M$ is not simple.}
\label{fig:M}
\end{figure}

The consequences are as follows.
\begin{itemize}[itemsep=2pt]
  \item If ``cubic graph'' in the second formulation admits loopless multigraphs, then that
  formulation is also false, since $\chis(T(M))=\chis(H)=7$.
  \item If ``cubic graph'' means \emph{simple} cubic graph, then $M$ is excluded because it
  has parallel edges. In that reading the second formulation is strictly narrower than the
  first, and $H$ does not refute it.
  \item The triangular prism has the same character: it is $T(M_0)$ for $M_0$ the two-vertex
  cubic multigraph with three parallel edges, and it satisfies $\chis=9$.
\end{itemize}

Thus the genuinely narrower question left open is
\[
  \boxed{\ \chis(T(G))=6 \quad\text{for every \emph{simple} cubic graph } G.\ }
\]
Our counterexample does not settle this restricted version.

\begin{remark}
The first formulation of Problem~\ref{prob:41}, read literally over simple graphs, already
fails on the triangular prism, for which $L(P)^2=K_9$ and hence $\chis(P)=9$; this is
recorded by Lin and Lin~\cite{LinLin2023}. That single exception could be dismissed as an
accidentally omitted base case. Theorem~\ref{thm:main} shows something stronger: excluding
the prism does not repair the statement, because $H$ is an $18$-vertex counterexample far
from the prism. As a sanity check on the boundary of the phenomenon, the Tutte $8$-cage
(the Tutte--Coxeter graph, a cubic graph of girth $8$ on $30$ vertices) is \emph{not} a
truncation and satisfies $\chis=7$; we verified this by the same computation, which also
shows that its conflict graph has clique number only $5$. Thus $6$ is already too few for
cubic graphs in general, so the truncation structure is doing real work in whatever form of
the conjecture survives.
\end{remark}

\section{Computational verification}\label{sec:comp}

Every claim above was checked by exact computation on the explicit graph. Building $H$ from
Constructions~\ref{con:Q}--\ref{con:H} and forming the conflict graph
$L(H)^2$ (two edges adjacent iff they share an endpoint or are joined by an edge of $H$)
yields a graph on $27$ vertices and $122$ edges. A branch-and-bound chromatic-number
computation gives
\[
  \chi\!\left(L(H)^2\right)=7, \qquad \omega\!\left(L(H)^2\right)=6,
\]
confirming $\chis(H)=7$ and, in particular, that the obstruction is not a $K_7$. The same
routine gives $\chi\!\left(L(Q)^2\right)=7$ with a maximum clique of size $6$ realized by
$K=\{x,u,v,w,y,z\}$, and an independent check reproduces every row of the availability table
of Section~3 and every conflict used in the forcing chain. An explicit strong
$7$-edge-coloring of $H$, which establishes the upper bound $\chis(H)\le 7$ directly and
independently of~\cite{LinLin2023}, is recorded in Appendix~\ref{app:coloring}. The
verification script that reproduces every computation in this paper --- including the
minimality enumeration below --- is provided as an ancillary file (\texttt{anc/verify.py}).

\subsection{Minimality}

The same enumeration shows that $H$ is as small as such a graph can be.

\begin{proposition}\label{prop:min}
Every connected diamond-free claw-free cubic graph on fewer than $18$ vertices, other than
the triangular prism, is strongly $6$-edge-colorable. Hence $H$ has the minimum number of
vertices among connected diamond-free claw-free cubic graphs, other than the triangular
prism, with strong chromatic index $7$.
\end{proposition}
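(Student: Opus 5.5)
The plan is to reduce the statement to a finite enumeration of cubic multigraphs, using the correspondence in Proposition~\ref{prop:multigraph}. In a diamond-free claw-free cubic graph every vertex lies in exactly one triangle. Contracting the triangles is therefore well defined, and every connected such graph on $n$ vertices is $T(G)$ for a unique connected loopless cubic multigraph $G$ on $n/3$ vertices. Since $T(G)$ has $3|V(G)|$ vertices and $|V(G)|$ is even, the only possible orders below $18$ are $6$ and $12$, coming from base multigraphs on $2$ and $4$ vertices.

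First I would list these base multigraphs. There is no loop to worry about: a loop at a vertex would give an edge of $T(G)$ between two corners of the same triangle, which would double an edge and break simplicity. On $2$ vertices the only connected loopless cubic multigraph is the triple edge $M_0$, and $T(M_0)$ is the triangular prism, which is excluded. On $4$ vertices a short case analysis on the largest edge multiplicity gives exactly three multigraphs:
\begin{itemize}[itemsep=2pt]
  \item the simple graph $K_4$;
  \item the $4$-cycle with two opposite edges doubled;
  \item the graph obtained from two double edges $u_1v_1$, $u_2v_2$ joined by two further edges $u_1u_2$, $v_1v_2$.
\end{itemize}
A triple edge on $4$ vertices would form its own component, so it cannot occur. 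I would double-check that the second and third items are genuinely different multigraphs, and also consider whether any other arrangement survives; either way, the list has at most a handful of entries.

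For each $12$-vertex truncation I would then exhibit an explicit strong $6$-edge-coloring of its $18$ edges. For $T(K_4)$, the truncated tetrahedron, I would use its symmetry: give each triangle's three edges distinct colors and assign the six connecting edges so that each color class is an induced matching. For the two multigraph truncations I would build the coloring triangle by triangle, reusing the forcing viewpoint of Section~3 but in reverse. Each triangle together with its three outgoing edges forms a $6$-clique in $L^2$, just as in Lemma~\ref{lem:clique}, so each triangle with its pendants uses all six colors. The work is to choose these local color patterns consistently across the connecting edges.

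The main obstacle is making the double-edge configurations work. There, two connecting edges join the same pair of triangles, which is exactly the local structure that killed $Q$. So I must check that with four triangles, rather than the constrained gadget, the lists still admit a completion. I would try first to fix the clique of one $B$-like triangle and propagate lists as in the table of Section~3, choosing the free branch that leaves the last edge a color. If hand propagation becomes messy, I would fall back on citing the exhaustive check in \texttt{anc/verify.py}. Minimality of $H$ then follows immediately, since every smaller non-prism example is $6$-colorable and $\chis(H)=7$ by Theorem~\ref{thm:main}.
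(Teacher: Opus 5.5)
Your proposal is correct and is essentially the paper's proof: contract the triangles to pass to loopless cubic multigraphs on $2$ and $4$ vertices, discard the prism $T(M_0)$, and check that the $12$-vertex truncations are strongly $6$-edge-colorable (the paper does this last step by exact computation, which is also your fallback). One small correction: your third multigraph is just the $4$-cycle $u_1v_1v_2u_2$ with its opposite edges $u_1v_1$ and $u_2v_2$ doubled, i.e.\ the same as your second item, so there are exactly two base multigraphs on four vertices, as the paper states --- the duplication is harmless, since a list with a repeated entry is still complete.
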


\begin{proof}
By Proposition~\ref{prop:multigraph} every diamond-free claw-free cubic graph is the
truncation $T(M)$ of a loopless cubic multigraph $M$, and $|V(T(M))|=3\,|V(M)|$. A cubic
multigraph has an even number of vertices, so $|V(M)|$ is even and $|V(T(M))|$ is a multiple
of $6$; the only orders below $18$ are therefore $6$ and $12$. On $6$ vertices the sole
connected such graph is the triangular prism, with $\chis=9$. For $12$ vertices there are,
up to isomorphism, exactly two connected loopless cubic multigraphs on four vertices --- the
complete graph $K_4$, whose truncation $T(K_4)$ is the truncated tetrahedron, and the
$4$-cycle with two opposite edges doubled --- and for each we computed
$\chi\!\left(L(T(M))^2\right)=6$. Thus no non-prism counterexample exists below $18$
vertices, and $H$ attains $18$.
\end{proof}

\appendix

\section{An explicit strong \texorpdfstring{$7$}{7}-edge-coloring of \texorpdfstring{$H$}{H}}
\label{app:coloring}

The table below assigns one of the colors $1,\dots,7$ to every edge of $H$, using the edge
names of Section~3 within each copy; the bridge $a_2^{(1)}a_2^{(2)}$ receives color $3$.
One checks directly (or by the routine of Section~\ref{sec:comp}) that any two edges that
are adjacent or joined by an edge receive different colors, so this is a strong
$7$-edge-coloring and $\chis(H)\le 7$. Together with $\chis(H)\ge 7$ from
Proposition~\ref{prop:noSix} this gives $\chis(H)=7$ without appeal to~\cite{LinLin2023}.

\begin{center}
\begin{tabular}{l ccccc ccccc ccc}
\toprule
& $x$ & $u$ & $v$ & $w$ & $y$ & $z$ & $r$ & $s$ & $t$ & $q$ & $m$ & $n$ & $p$ \\
\midrule
Copy $Q_1$ & 1 & 2 & 3 & 6 & 4 & 5 & 7 & 3 & 1 & 2 & 4 & 5 & 6 \\
Copy $Q_2$ & 1 & 2 & 3 & 6 & 4 & 5 & 7 & 3 & 1 & 2 & 5 & 4 & 7 \\
\bottomrule
\end{tabular}
\end{center}

\noindent Here $x=a_0b_0$, $u=b_0b_1$, $v=b_0b_2$, $w=b_1b_2$, $y=b_1c_1$, $z=b_2c_2$,
$r=c_1c_2$, $s=c_0c_1$, $t=c_0c_2$, $q=a_1c_0$, $m=a_0a_1$, $n=a_0a_2$, $p=a_1a_2$ within
each copy, and the bridge $a_2^{(1)}a_2^{(2)}$ has color $3$.

\section*{Acknowledgments}

The counterexample $H$ was found with the assistance of OpenAI's GPT-5.6 Sol Pro; the model
proposed the gadget construction, which the author then checked and formalized. Every claim
was verified independently, both by hand and by the exact computation of
Section~\ref{sec:comp}. The author also thanks Franti\v{s}ek Kardo\v{s} for posing
Problem~4.1, whose precise phrasing motivated the distinction between the simple and
multigraph readings analyzed here.

\end{document}